\newcommand{\labbel}{\label}
\newtheorem{theorem}{Theorem}[section]
\newtheorem{lemma}[theorem]{Lemma}
\newtheorem{proposition}[theorem]{Proposition} 
\newtheorem{cor}[theorem]{Corollary} 
\newtheorem{corollary}[theorem]{Corollary}
\newtheorem*{theorem*}{Theorem}
\newtheorem*{corollary*}{Corollary}
\theoremstyle{definition}
\newtheorem{definition}[theorem]{Definition}
\theoremstyle{remark}
\newcommand{\brfrt}{\hspace{0 pt}}
\DeclareMathOperator{\cf}{cf}
\begin{document}
 
\title[Menger property, ultrafilter convergence]
{A characterization of the Menger property by means of ultrafilter convergence}

\author{Paolo Lipparini} 
\address{Dipartimiento di Matematica\\ Viale della Ricerca Scientifica\\II Universit\`a di Roma (Tor Vergata)\\I-00133 ROME ITALY}
\urladdr{http://www.mat.uniroma2.it/\textasciitilde lipparin}

\thanks{We thank Xavier Caicedo and Boaz Tsaban for very stimulating correspondence.}  

\keywords{Covering, Menger, Rothberger property;  ultrafilter convergence; limit point; product} 

\subjclass[2010]{Primary 54D20, 54A20; Secondary 54B10, 03E05}

\begin{abstract}
We characterize various Menger/Rothberger-related 
properties, and discuss their behavior
with respect to products.
\end{abstract} 
 
\maketitle

Motivated by classical arguments in the theory of ultrafilter convergence, 
we give a characterization of the Menger property and 
of the Rothberger
property  by means of
ultrafilters and filters, respectively.
In this vein, we discuss the behavior
with respect to products of the above notions, and of some 
weaker variants.

A summary of the paper follows.
In Section \ref{ufconv} we briefly recall 
the notion of filter convergence, together with some classical examples,
which furnish the main motivation for the present paper.

In Section \ref{mengeruf} we show that the Menger property
allows a characterization in terms of ultrafilter convergence.
Actually, our methods work also for the notion in which we 
fix a bound for the cardinality of the covers under consideration.
Even more generally, we can also consider more than countably many families 
of covers, and even allow infinite subsets to be selected.
We are thus led to consider a generalized Rothberger notion 
 $R( \lambda, \mu; \mathord{<}\kappa )$
which depends  on three cardinals, and generalizes simultaneously
the Menger property, the Rothberger property, 
the Menger and the Rothberger properties for countable covers, 
as well as $[ \kappa, \mu]$-compactness
(in particular, countable compactness, initial $\mu$-compactness, Lindel\"ofness and 
final $\kappa$-compactness).  See Definition \ref{threecardmenger}.

In Section \ref{prodthm} 
we study preservation and non preservation 
under products of the 
generalized Rothberger notion 
 $R( \lambda, \mu; \mathord{<}\kappa )$,
establishing a strong connection with 
preservation/non preservation of 
$[ \kappa, \mu]$-\brfrt compactness. 
For example, we get that
 every product of members of some family $\mathcal F$ 
of topological spaces satisfies
the Menger property for countable covers 
if and only if  
 every product of members of $\mathcal F$ is
countably compact.

Finally, in Section \ref{rothbsec} we deal 
with particular cases strongly resembling the
 Rothberger property (in the sense that only one element can be selected
from each cover). In this case, the characterization 
in terms of convergence
involves
filters which are not maximal, and this fact can be seen 
as the one ``responsible'' for non preservation under products.

The sections of the paper are rather independent and,  
apart from some comments and with the exceptions mentioned below, 
 can be read in any order.
All sections use the (classical) definitions recalled in the first two 
paragraphs of Section \ref{ufconv}.
Sections \ref{mengeruf}-\ref{rothbsec}
rely on Definition \ref{threecardmenger}.
Finally, Section \ref{rothbsec} depends  on Lemma \ref{lemr}
and Proposition \ref {mengvslm}.

\section{Some facts about ultrafilter convergence} \labbel{ufconv} 

Many topological properties have been characterized in terms
of ultrafilter convergence. 
Recall that if $I$ is a set, $(x_i) _{i \in I} $ 
is an $I$-indexed sequence  of elements of some topological space  $X$,
and $F$  is a filter over $I$, 
then a point $x \in X$ is said to be an
 \emph{$F$-limit point} 
 of the sequence
$(x_i) _{i \in I} $
 if 
 $\{ i \in I \mid x_i \in U\} \in F$,
for every open neighborhood $U$ of $x$ (Choquet \cite[Section IV]{Ch}, Katetov \cite{Kat}).
Notice that if $X$ is $T_2$, then a sequence has at most one $F$-limit point;
this is the reason for the alternative
expression \emph{$F$-convergent sequence}. 
In this note we shall not actually 
need unicity of $F$-limits, therefore  we shall
assume no separation axiom.

Among the many properties which have been 
considered,   definable in terms
of ultrafilter convergence, a classical one
asks  that \emph{every} sequence has an $F$-limit point, for some given $F$.
In details, a topological space $X$ is \emph{$F$-compact},
for some filter $F$ over $I$,  if every $I$-indexed sequence  of elements of $X$ has some limit point in $X$ (Bernstein \cite{Be} for ultrafilters over $ I= \omega$,
strongly motivated by Robinson non-standard analysis
\cite{Ro}; Saks \cite{Sa} for ultrafilters over cardinals).
$F$-compactness has the pleasant property  of being preserved under taking products.

Interesting topological properties arise from conditions asserting that a topological space is $D$-compact,
for all ultrafilters $D$ in some particular  class of ultrafilters.
For example, a regular topological space $X$ is 
  \emph{$ \lambda $-bounded}
 (i.~e., the closure of every  subset of cardinality $\leq \lambda$ is compact)
if and only if $X$ is $D$-compact, for every ultrafilter $D$ 
over $\lambda$.
In particular, a topological space is compact if and only if 
it is $D$-compact, for every ultrafilter $D$.
See \cite[Theorem 2.9, and Section 5]{Sa}. 

A much broader range of applications of ultrafilter convergence
has been discovered shortly after the appearance of 
Bernstein paper.
For example, it follows from Ginsburg and Saks \cite[p. 404]{GS}  that a topological space is countably compact if and only if,
for every sequence $(x_i) _{i \in \omega } $
of elements of $X$, there exists some ultrafilter
$D$  uniform over 
$ \omega$ such that  
the sequence has some $D$-limit point in $X$.
Here the ultrafilter is not fixed in advance, but, in general,
it depends on the sequence.
The above characterization of countable compactness 
is the key to the result, 
also due to Ginsburg and Saks \cite[Theorem 2.6]{GS},
that all powers of some space $X$ are countably compact
if and only if $X$ is $D$-compact, for some ultrafilter $D$
uniform over $ \omega$.
Similar characterizations, and product theorems as well, 
appear in \cite{Sa} for  general types of accumulation properties,
including, in particular, initial $\mu$-compactness,
and, in equivalent form, $[ \mu, \mu]$-compactness,
for $\mu$ regular.
Saks' results have been subsequently extended to
$[ \kappa, \mu]$-compactness
by Caicedo \cite{Ca}. Caicedo's treatment has also the
advantage of using a single ultrafilter, rather than a family of ultrafilters.

 A comprehensive survey of
earlier results on the subject can be found in Vaughan  \cite{Vahand} 
and Stephenson \cite{Stehand},
together with many related notions and results.
 See Ko{\v{c}}inac and Garc{\'{\i}}a-Ferreira \cite[in particular, Section 3]{KG} for a survey of additional
 results, and Lipparini \cite{tproc2,cmuc,sssr} for even more general treatments
 of the subject.

\section{The Menger property in terms of ultrafilter convergence} \labbel{mengeruf} 

In this section we present a characterization 
of the Menger property
along the lines described in the above section. 
See, e.~g., Scheepers \cite{Sch}, 
Tsaban \cite{Ts}  and Ko{\v{c}}inac \cite{Koc} for information and
references about the Menger and related properties.

Our proofs become  cleaner if we explicitly settle
 the cardinalities of the families of covers under consideration.
Thus we are lead to consider the following general
property which depends  on 3 cardinal parameters (and which might also have independent interest).

\begin{definition} \labbel{threecardmenger}
For $\lambda$, $\mu$, and $\kappa$ nonzero cardinals, let us say that  a topological space $X$
satisfies 
 $R( \lambda, \mu; \mathord{<}\kappa )$,
short for \emph{$X$ satisfies the $\mathord{<}\kappa$-Rothberger property for
sequences of $\lambda$ covers of cardinality $\leq\mu$}, 
if, for every sequence $(\mathcal U_ \alpha ) _{ \alpha \in \lambda } $
of open covers of $X$, such that $| \mathcal U_ \alpha | \leq \mu$,
for every $\alpha \in \lambda $, there are subsets 
$\mathcal V_ \alpha
\subseteq \mathcal U_ \alpha$ ($\alpha \in \lambda $)  
such that 
$\bigcup_{ \alpha \in \lambda } \mathcal V_ \alpha$
is a cover of $X$, and 
$| \mathcal V_ \alpha| <\kappa $, for every $\alpha \in \lambda $.

We shall write 
$R( \lambda, \infty; \mathord{<}\kappa )$
 if we put no restriction on the cardinality of the $\mathcal U_ \alpha $'s, 
and we shall write 
$R( \lambda, \mu; \kappa )$ for
$R( \lambda, \mu; \mathord{<}(\kappa^+) )$.

Thus, in the above notations, the \emph{Menger property}
is  
$R( \omega ,  \infty; \mathord{<} \omega )$, and 
the \emph{Rothberger property}
is  
$R( \omega  , \infty; 1 )$.
Notice that in \cite{cmuc} we used a
nonstandard  terminology,
calling a space ``Menger'' if it satisfies
$R( \omega , \omega , \mathord{<} \omega )$, and 
``Rothberger''  
if it satisfies
$R( \omega , \omega , 1 )$.
Of course, the terminology agrees, say, 
in the class of Lindel\"of   spaces.
Actually, a space is Menger (Rothberger) if and only if 
it is Lindel\"of and satisfies $R( \omega , \omega , \mathord{<} \omega )$
($R( \omega , \omega , 1 )$, respectively).

Here we shall call $R( \omega , \omega , \mathord{<} \omega )$ the 
\emph{Menger property  for countable covers} and 
$R( \omega , \omega , 1)$
the \emph{Rothberger property for countable covers}.
When $ \mu < \infty$, these appear to be the most interesting particular cases of 
  $R( \lambda, \mu; \mathord{<}\kappa )$. The relevance of 
$R( \omega , \omega , \mathord{<} \omega )$ and of 
$R( \omega , \omega , 1)$ has
been pointed out, for example, in \cite{Ts}.

Notice  that, in the above terminology,  the Lindel\"of property 
can be written as  $R( 1 , \infty;  \omega  )$.
There are some trivial relations between $R( \lambda, \mu; \mathord{<}\kappa )$
and $R( \lambda', \mu'; \mathord{<}\kappa' )$, for various cardinals 
$\lambda$, $\lambda'$, $\mu$ \dots, but we shall not need these
here.
\end{definition}

 For  cardinals $\lambda$, $\mu$ and $\kappa$, we let 
$[\mu] ^{\mathord{<} \kappa } $ denote  the set of all  subsets of $\mu$ of cardinality $\mathord{<}\kappa$, and we let
$^ \lambda  ([\mu] ^{\mathord{<} \kappa })  $ denote  the set of all functions from $ \lambda $
to $[\mu] ^{\mathord{<} \kappa } $.
To avoid
complex formulas in subscripts, 
we sometimes shall denote
a sequence 
$(x_i) _{i \in I} $ 
as 
 $ \langle  x_i \mid  i \in I \rangle  $.

\begin{lemma} \labbel{lemr} 
For every topological space $X$, 
and $\lambda$, $\mu$ and  $\kappa$ nonzero cardinals, the following conditions are equivalent. \begin{enumerate}   
\item 
$X$ satisfies  $R( \lambda, \mu; \mathord{<}\kappa )$.
\item
For every sequence $(\mathcal C _ \alpha ) _{ \alpha \in \lambda } $
of families of closed sets of $X$, each family having cardinality $  \leq \mu$,
if, 
for every choice of subfamilies 
$ \mathcal D _ \alpha \subseteq  \mathcal C _ \alpha $
 in such a way  that each 
$ \mathcal D _ \alpha $
has cardinality $ <  \kappa $,
it happens that 
$ \bigcap \{ C \mid C \in \mathcal D _ \alpha , \alpha \in \lambda \} \not= \emptyset  $,
then there is $\bar \alpha \in \lambda $
such that 
$\bigcap \mathcal C _{\bar  \alpha}  \not= \emptyset  $.  
\item
For every sequence $ \langle  x_f \mid  f :\lambda \to  [\mu] ^{\mathord{<} \kappa } \rangle  $ of elements of $X$,
there is $\bar\alpha \in \lambda $
such that 
$\bigcap _{ \beta \in \mu}   E _{ \bar\alpha, \beta   }   \not= \emptyset  $,
where, for  $\alpha \in \lambda $ and
$ \beta  \in \mu$, we put
 $ E _{ \alpha, \beta   }  =
\overline{ \{ x_f \mid f \in I, \beta \in  f( \alpha ) \} } $.
   \end{enumerate} 
\end{lemma}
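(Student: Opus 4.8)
The plan is to establish the two equivalences $(1)\Leftrightarrow(2)$ and $(2)\Leftrightarrow(3)$ separately. The first is a routine complementation duality between open covers and families of closed sets; the second carries the genuinely topological content, linking the combinatorial selection principle to a convergence-style statement, and is where the work lies.

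For $(1)\Leftrightarrow(2)$ I would argue by contraposition using complements. Given a sequence $(\mathcal{U}_\alpha)_{\alpha\in\lambda}$ of open covers with $|\mathcal{U}_\alpha|\le\mu$, set $\mathcal{C}_\alpha=\{X\setminus U\mid U\in\mathcal{U}_\alpha\}$; conversely, from closed families one recovers open families by the same complementation. Under this correspondence $\mathcal{V}_\alpha\subseteq\mathcal{U}_\alpha$ matches $\mathcal{D}_\alpha\subseteq\mathcal{C}_\alpha$ with equal cardinalities, $\bigcup_\alpha\mathcal{V}_\alpha$ covers $X$ exactly when $\bigcap\{C\mid C\in\mathcal{D}_\alpha,\ \alpha\in\lambda\}=\emptyset$, and $\mathcal{U}_\alpha$ is a cover exactly when $\bigcap\mathcal{C}_\alpha=\emptyset$. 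Thus the failure of $R(\lambda,\mu;\mathord{<}\kappa)$ --- an open-cover sequence in which no admissible selection covers --- translates precisely into a closed-family sequence satisfying the hypothesis of $(2)$ (every small total intersection is nonempty) yet violating its conclusion (every $\bigcap\mathcal{C}_\alpha$ is empty), and vice versa. I would dispose of trivial edge cases (empty families, $X=\emptyset$) by the usual conventions.

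The heart is $(2)\Leftrightarrow(3)$, which I would prove by a \emph{packaging} device turning selections of small subfamilies into single functions $f\in{}^\lambda([\mu]^{\mathord{<}\kappa})$. For $(2)\Rightarrow(3)$, given a sequence $\langle x_f\rangle$, I would take $\mathcal{C}_\alpha=\{E_{\alpha,\beta}\mid\beta\in\mu\}$, each of size $\le\mu$. To verify the hypothesis of $(2)$, given subfamilies $\mathcal{D}_\alpha\subseteq\mathcal{C}_\alpha$ of size $\mathord{<}\kappa$, choose $B_\alpha\in[\mu]^{\mathord{<}\kappa}$ indexing them and package these into the single function $f(\alpha)=B_\alpha$; then $\beta\in f(\alpha)$ forces $x_f\in E_{\alpha,\beta}$, so $x_f$ lies in the whole total intersection, which is therefore nonempty. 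Condition $(2)$ then yields $\bar\alpha$ with $\bigcap\mathcal{C}_{\bar\alpha}=\bigcap_{\beta\in\mu}E_{\bar\alpha,\beta}\neq\emptyset$, exactly the conclusion of $(3)$.

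For the converse $(3)\Rightarrow(2)$, I would enumerate each family as $\mathcal{C}_\alpha=\{C_{\alpha,\beta}\mid\beta\in\mu\}$ (allowing repetitions) and, for every $f\in{}^\lambda([\mu]^{\mathord{<}\kappa})$, use the hypothesis of $(2)$ applied to the subfamilies $\{C_{\alpha,\beta}\mid\beta\in f(\alpha)\}$ to choose a point $x_f\in\bigcap_{\alpha\in\lambda}\bigcap_{\beta\in f(\alpha)}C_{\alpha,\beta}$. Feeding $\langle x_f\rangle$ into $(3)$ produces $\bar\alpha$ with $\bigcap_{\beta\in\mu}E_{\bar\alpha,\beta}\neq\emptyset$. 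The crux --- and the main obstacle to watch --- is that the closures in the definition of $E_{\bar\alpha,\beta}$ cause no loss: whenever $\beta\in f(\bar\alpha)$ the construction forces $x_f\in C_{\bar\alpha,\beta}$, so $\{x_f\mid\beta\in f(\bar\alpha)\}\subseteq C_{\bar\alpha,\beta}$, and since $C_{\bar\alpha,\beta}$ is already \emph{closed} we get $E_{\bar\alpha,\beta}\subseteq C_{\bar\alpha,\beta}$. Intersecting over $\beta$ gives $\bigcap\mathcal{C}_{\bar\alpha}\supseteq\bigcap_{\beta\in\mu}E_{\bar\alpha,\beta}\neq\emptyset$, as required. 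This last containment is the one place where closedness of the $\mathcal{C}_\alpha$ is essential, and it is exactly why the convergence formulation $(3)$ is phrased with closures.
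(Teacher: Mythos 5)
Your proposal is correct and takes essentially the same route as the paper's own proof: the complementation duality for $(1)\Leftrightarrow(2)$, and for $(2)\Leftrightarrow(3)$ the same packaging of small subfamilies into single functions $f \in {}^{\lambda}([\mu]^{\mathord{<}\kappa})$ (the paper's $Z_\alpha$'s are your $B_\alpha$'s), including the key closing observation that $C_{\bar\alpha,\beta} \supseteq E_{\bar\alpha,\beta}$ because the $C_{\bar\alpha,\beta}$ are closed. There is nothing to correct.
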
 

 \begin{proof}
The equivalence of (1) and (2) is trivial   by taking complements.

(2) $\Rightarrow $  (3) Let 
 the $ E _{ \alpha, \beta   } $'s be defined as in (3), and let
 $\mathcal E _ \alpha = \{ E _{ \alpha, \beta   } \mid \beta  \in \mu \}  $,
for $\alpha \in \lambda $.
For each $\alpha\in \lambda $, choosing a subfamily $ \mathcal D _ \alpha \subseteq  \mathcal E _ \alpha $
with $ |\mathcal D _ \alpha|  <  \kappa  $ corresponds to choosing 
some $Z_ \alpha  \in [\mu] ^{\mathord{<} \kappa }$ in such a way that
$ \mathcal D _ \alpha =  \{ E _{ \alpha, \beta   } \mid \beta  \in  Z_ \alpha \} $.
If we make such a choice for each $\alpha \in \lambda $,
and we let $f : \lambda \to [\mu] ^{\mathord{<} \kappa }$  
be defined by $f( \alpha ) = Z_ \alpha $, for every $\alpha \in \lambda $, 
then $x_f \in E _{ \alpha, \beta   }$, for 
$\alpha \in \lambda $ and $ \beta \in f( \alpha )$, hence
$ \bigcap \{ C \mid C \in \mathcal D _ \alpha , \alpha \in \lambda \} \not= \emptyset  $.
Applying (2) to the family $(\mathcal E _ \alpha ) _{ \alpha \in \lambda } $,
we get that there is $\bar \alpha \in \lambda $
such that 
$\bigcap \mathcal E _{\bar  \alpha}  \not= \emptyset  $,
that is, (3) holds.  

(3) $\Rightarrow $  (2) Suppose that (3) holds, and that
$(\mathcal C _ \alpha ) _{ \alpha \in \lambda } $ satisfies the premise in (2).
We can write 
$\mathcal C _ \alpha = \{ C _{ \alpha, \beta   } \mid \beta  \in \mu \}  $
(using repetitions of members of $\mathcal C _ \alpha$,
when $|\mathcal C _ \alpha| \mathord{<} \mu$).
For every $ f :\lambda \to  [\mu] ^{\mathord{<} \kappa }$,
we have that 
$ C_f = \bigcap \{ C _{ \alpha, \beta   } \mid  \alpha \in \lambda, \beta \in f( \alpha )\}
\not= \emptyset   $,
by the premise of (2). Pick
some $x_f \in C_f$, for each $f$,
and let $\mathcal E _ \alpha$ and 
 $ E _{ \alpha, \beta   } $ be defined as above.
By (3), there is $\bar\alpha \in \lambda $
such that $\bigcap \mathcal E _{\bar \alpha}  \not= \emptyset  $.
But this implies that $\bigcap \mathcal C _{\bar \alpha}  \not= \emptyset  $,
since, by construction, 
 $ C _{ \alpha, \beta   }  \supseteq E _{ \alpha, \beta   } $,
for every $\alpha \in \lambda $ and $\beta \in \mu$.   
\end{proof}

We say that an ultrafilter $D$ over $^ \lambda  ([\mu] ^{\mathord{<} \kappa })  $  
is \emph{functionally regular} if
there is $\bar\alpha \in \lambda $ 
such that, for every $ \beta \in \mu $,
$A _{\bar \alpha , \beta }= \{ f \in { ^ \lambda ( [\mu] ^{\mathord{<} \kappa })  } \mid \beta  \in f( \bar\alpha )\} \in D$.

\begin{theorem} \labbel{mengthm}
Suppose that  $\mu$ and $\kappa$
are infinite cardinals, $\lambda$
is a nonzero cardinal, and let $I={ ^ \lambda  ([\mu] ^{\mathord{<} \kappa })}$.
Then, for every topological space $X$, the following conditions are equivalent. 
\begin{enumerate}   
\item 
$X$ satisfies $R( \lambda, \mu; \mathord{<}\kappa )$.
\item
For every sequence $\langle  x_f \mid  f \in I  \rangle $ of elements of $X$,
there is $\bar\alpha \in \lambda $
such that 
$\bigcap \mathcal C _{\bar \alpha}  \not= \emptyset  $,
where $\mathcal C _ \alpha = \{ C _{ \alpha, s  } \mid s \in [\mu]^{\mathord{<} \omega } \}  $,
for $\alpha \in \lambda $, and
 $ C _{ \alpha, s  }  =
\overline{ \{ x_f \mid f \in I, s \subseteq f( \alpha ) \} } $,
for $s \in [\mu]^{\mathord{<} \omega }$.
\item
For every sequence $\langle  x_f \mid  f \in I  \rangle $ of elements of $X$, there is a functionally regular ultrafilter $D$ over $I$ such that
$\langle  x_f \mid  f \in I  \rangle $ has some $D$-limit point in $X$.
  \end{enumerate} 
 \end{theorem}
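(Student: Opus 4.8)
The plan is to argue cyclically, $(1)\Rightarrow(2)\Rightarrow(3)\Rightarrow(1)$, leaning on Lemma \ref{lemr} at the two ends and handling the passage to ultrafilters by a direct finite-intersection argument in the middle. For $(1)\Rightarrow(2)$ I would use the equivalence of conditions (1) and (2) of Lemma \ref{lemr} and apply condition (2) of that lemma to the families $\mathcal C_\alpha=\{C_{\alpha,s}\mid s\in[\mu]^{\mathord{<}\omega}\}$. Since $\mu$ is infinite, $|[\mu]^{\mathord{<}\omega}|=\mu$, so each $\mathcal C_\alpha$ has cardinality $\leq\mu$, as required. To verify the hypothesis of that condition, given any choice of subfamilies $\mathcal D_\alpha=\{C_{\alpha,s}\mid s\in S_\alpha\}$ with $|S_\alpha|<\kappa$, I would put $f^*(\alpha)=\bigcup_{s\in S_\alpha}s$. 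Because $\kappa$ is infinite, a union of fewer than $\kappa$ finite sets again has cardinality $<\kappa$, so $f^*\in I$; and by construction $s\subseteq f^*(\alpha)$ for each $s\in S_\alpha$, whence $x_{f^*}$ lies in every $C_{\alpha,s}$ of every $\mathcal D_\alpha$. Thus the intersection in the premise of Lemma \ref{lemr}(2) is nonempty, and the lemma yields $\bar\alpha$ with $\bigcap\mathcal C_{\bar\alpha}\neq\emptyset$, which is exactly (2).

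For $(2)\Rightarrow(3)$, fix a sequence $\langle x_f\mid f\in I\rangle$ and let $\bar\alpha$ and $x\in\bigcap_{s}C_{\bar\alpha,s}$ be as provided by (2). Writing $A_{\bar\alpha,\beta}=\{f\mid\beta\in f(\bar\alpha)\}$, I would show that the family $\{A_{\bar\alpha,\beta}\mid\beta\in\mu\}\cup\{\{f\mid x_f\in U\}\mid U\text{ a neighborhood of }x\}$ has the finite intersection property. A typical finite intersection collapses to $\{f\mid s\subseteq f(\bar\alpha)\}\cap\{f\mid x_f\in U\}$ for some finite $s\subseteq\mu$ and some neighborhood $U$ of $x$ (finitely many neighborhoods intersect to one, and $\{f\mid s\subseteq f(\bar\alpha)\}=\bigcap_{\beta\in s}A_{\bar\alpha,\beta}$); this is nonempty precisely because $x\in C_{\bar\alpha,s}=\overline{\{x_f\mid s\subseteq f(\bar\alpha)\}}$ forces $U$ to meet $\{x_f\mid s\subseteq f(\bar\alpha)\}$. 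Extending this family to an ultrafilter $D$ over $I$, I obtain that $D$ is functionally regular, witnessed by $\bar\alpha$ since every $A_{\bar\alpha,\beta}\in D$, and that $x$ is a $D$-limit point of the sequence, giving (3). This is exactly the place where the use of finite sets $s$, rather than single indices $\beta$, is indispensable: a finite $s$ encodes the finite intersection $\bigcap_{\beta\in s}A_{\bar\alpha,\beta}$ that must meet $\{f\mid x_f\in U\}$, so the closed sets $C_{\bar\alpha,s}$ are the correct ones for building the filter.

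Finally, for $(3)\Rightarrow(1)$ I would again use Lemma \ref{lemr}, this time establishing its condition (3). Given a sequence, (3) supplies a functionally regular $D$, witnessed by some $\bar\alpha$, with a $D$-limit point $x$. For each $\beta\in\mu$ and each neighborhood $U$ of $x$, both $A_{\bar\alpha,\beta}$ and $\{f\mid x_f\in U\}$ lie in $D$, so their intersection is nonempty; hence $U$ meets $\{x_f\mid\beta\in f(\bar\alpha)\}$, i.e. $x\in E_{\bar\alpha,\beta}$. As this holds for every $\beta$, we get $x\in\bigcap_{\beta\in\mu}E_{\bar\alpha,\beta}\neq\emptyset$, which is condition (3) of Lemma \ref{lemr}, equivalent to (1). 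I expect the only genuinely delicate point to be the cardinal bookkeeping in $(1)\Rightarrow(2)$, namely checking that $f^*$ really lands in $I$ (that $\bigl|\bigcup_{s\in S_\alpha}s\bigr|<\kappa$), which rests squarely on $\kappa$ being infinite; the remaining steps are routine once one recognizes that finite-subset indexing is precisely what makes the finite intersection property available.
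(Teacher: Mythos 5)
Your proposal is correct and follows essentially the same route as the paper's own proof: the same use of Lemma \ref{lemr}(2) with $f^*(\alpha)=\bigcup Z_\alpha$ for $(1)\Rightarrow(2)$, the same finite-intersection-property family $\{A_{\bar\alpha,\beta}\}\cup\{B_U\}$ extended to an ultrafilter for $(2)\Rightarrow(3)$, and the same verification of Lemma \ref{lemr}(3) via $x\in E_{\bar\alpha,\beta}$ for $(3)\Rightarrow(1)$ (the paper phrases this last step by contradiction, but the argument is identical). The cardinality point you flag as delicate --- that a union of fewer than $\kappa$ finite sets has size $<\kappa$ when $\kappa$ is infinite --- is exactly the observation the paper makes as well.
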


\begin{proof}
The proof combines ideas from \cite[Section 3]{Ca} and \cite[Theorem 5.8]{cmuc}. 

Suppose that (1) holds, and let the sequence  $\langle  x_f \mid  f \in I  \rangle $ be given.
Since $\mu$ is infinite, then $| [\mu]^{\mathord{<} \omega }|= \mu$,
thus $| \mathcal C _ \alpha | \leq \mu$, for every $\alpha \in \lambda $.   
Suppose that, for every $\alpha \in \lambda $, 
$ \mathcal D _ \alpha \subseteq  \mathcal C _ \alpha $,
and  
$ |\mathcal D _ \alpha| <  \kappa $.
This means that,
for every $\alpha \in \lambda $,
there is $Z_ \alpha \subseteq [\mu]^{\mathord{<} \omega }$
such that $| Z _ \alpha |  <  \kappa $,
 and  
$\mathcal D _ \alpha = \{ C _{ \alpha, s  } \mid s \in Z _ \alpha  \}  $.  
Define a function 
$f: \lambda  \to [\mu] ^{\mathord{<} \kappa }$
by 
$f( \alpha )= \bigcup Z_ \alpha $.
Notice that $|\bigcup Z_ \alpha |  <  \kappa $,
since $| Z _ \alpha |  <  \kappa $, $\kappa$ is infinite, and each member of
$| Z _ \alpha |  $ is finite.  
If $\alpha \in \lambda $,
and $s \in Z _ \alpha $,
then $x_f $ belongs to $  C _{ \alpha, s  }$,
thus   
$ x_f \in \bigcap \{ C \mid C \in \mathcal D _ \alpha , \alpha \in \lambda \} $,
hence this last set is nonempty.
Since, by assumption,
$X$ satisfies $R( \lambda, \mu; \mathord{<}\kappa )$,
then, by Lemma \ref{lemr}(2), 
 there is $\bar\alpha \in \lambda $
such that 
$\bigcap \mathcal C _{\bar \alpha}  \not= \emptyset  $.   
Thus (2) is proved.

Now suppose that (2) holds, and that 
$\langle  x_f \mid  f \in I  \rangle $ is a sequence of elements of $X$.
For the $\bar\alpha$ given by (2), we can pick $ x \in \bigcap \mathcal C _{\bar \alpha}   $.   
Thus 
$x \in C _{ \bar\alpha, s  }$,
for every $s \in [\mu]^{\mathord{<} \omega }$.  
For any neighborhood $U$ of $x$ in $X$, let 
$B_U = \{ f \in I \mid x_f \in U\} $, 
and let 
$\mathcal F = \{ B_U \mid U \text{ a neighborhood of } x \} $.
Let 
$\mathcal G = \{ A _{\bar \alpha , \beta } \mid \beta \in \mu \} $, where, 
as in the definition of functional regularity, 
for every $\beta \in \mu$,  we let 
$A _{\bar \alpha , \beta }= \{ f \in I \mid \beta  \in f( \bar\alpha )\}$.
We want to show that $\mathcal F \cup \mathcal G$ has the finite intersection property. 
Notice that  $B_U \cap B_V = B _{U \cap V} $, for every pair 
 $U$, $V$ of neighborhoods of $x$. Moreover, 
$A _{\bar \alpha , \beta_1 } \cap \dots \cap A _{\bar \alpha , \beta_m }
= \{ f \in I \mid s \subseteq  f( \bar\alpha )\}$, for $s= \{ \beta _1, \dots, \beta _m \} $.
 Hence, in order to prove that $\mathcal F \cup \mathcal G$ has the finite intersection property, it is enough to show that 
$B_U \cap A _{\bar \alpha , s}  \not= \emptyset    $, for every
neighborhood $U$ of $x$, and every $s \in [\mu]^{\mathord{<} \omega }$,
where we have put 
$A _{\bar \alpha , s} 
= \{ f \in I \mid s \subseteq  f( \bar\alpha )\}$.
Indeed, 
for every $U$ and $s$ as above, 
since
$x \in C _{ \bar\alpha, s  }$,
there is $f \in I$
such that $s \subseteq  f( \bar\alpha )$,
and $x_f \in U$,
and this means exactly that 
$x_f $ belongs to $ B_U \cap A _{\bar \alpha , s} $,
thus this set is not empty.
We have proved that    
$\mathcal F \cup \mathcal G$ has the finite intersection property.

Thus $\mathcal F \cup \mathcal G$ can be extended to an ultrafilter $D$ over $I$.
Since $D \supseteq \mathcal G$, then $D$ is functionally regular.   
Since $D \supseteq \mathcal F$, then 
$x$ is a $D$-limit point of $\langle  x_f \mid  f \in I  \rangle $.
The implication (2) $\Rightarrow $  (3) is thus proved.

Now assume that (3) holds.
We shall prove that Condition (3) in Lemma \ref{lemr} holds, thus
$X$ satisfies $R( \lambda, \mu; \mathord{<}\kappa )$.
Suppose that 
$\langle  x_f \mid  f \in I  \rangle $ is a sequence of elements of $X$.
Condition (3)  in the present theorem furnishes an
element $x \in X$, an ultrafilter $D$ over $I$,
and an  $\bar\alpha \in \lambda $ 
such that
$x$ is a $D$-limit point of $\langle  x_f \mid  f \in I  \rangle $,
and, for every $ \beta \in \mu $,
$
\{ f \in I \mid \beta  \in f( \bar\alpha )\} \in D$.

We will show that
$x \in E _{ \bar\alpha, \beta   } $,
for every $\beta \in \mu$,
where, as in Lemma \ref{lemr}, 
$ E _{ \bar\alpha, \beta   }  =
\overline{ \{ x_f \mid f \in I, \beta \in  f( \bar\alpha ) \} } $.
If, by contradiction, $x \not\in E _{ \bar\alpha, \beta   } $,
for some $\beta \in \mu$, then there 
is a neighborhood $U$ of $x$ such that 
$U \cap \{ x_f \mid f \in I, \beta \in  f( \bar\alpha ) \} = \emptyset  $,
hence 
$\{ f \in I \mid x_f \in U\}  \cap \{ f \in I \mid \beta  \in f( \bar\alpha )\} = \emptyset$.  
Since $x$ is a $D$-limit point of $\langle  x_f \mid  f \in I  \rangle $,
then $\{ f \in I \mid x_f \in U\} \in D$ and this contradicts 
$\{ f \in I \mid \beta  \in f( \bar\alpha )\} \in D$, since $D$ is a proper filter.
 \end{proof}

\begin{corollary} \labbel{mengcor}
For every topological space $X$, the following conditions are equivalent. \begin{enumerate}  
 \item  
$X$  satisfies the Menger property.  
\item
For every infinite cardinal $\mu$,
and for every sequence $ \langle   x_f \mid  f \in  { ^ \omega  ( [\mu] ^{\mathord{<} \omega  })} \rangle$ of elements of $X$, there is a functionally regular ultrafilter $D$ over ${ ^ \omega   ([\mu] ^{\mathord{<} \omega })}$ such that
the sequence has some $D$-limit point in $X$.
\item
$X$ is Lindel\"of, and, 
for every sequence $ \langle  x_f \mid  f \in  { ^ \omega  ( [ \omega ] ^{\mathord{<} \omega  })}  \rangle$ of elements of $X$, there is a functionally regular ultrafilter $D$ over ${ ^ \omega   ([ \omega ] ^{\mathord{<} \omega })}$ such that
the sequence has some $D$-limit point in $X$.
 \end{enumerate} 
 \end{corollary}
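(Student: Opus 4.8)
The plan is to obtain both equivalences directly from Theorem \ref{mengthm}, applied with $\lambda=\omega$ and $\kappa=\omega$, together with two elementary bookkeeping reductions. The first reduction records that the unrestricted Menger property $R(\omega,\infty;\mathord{<}\omega)$ is equivalent to the conjunction, over all infinite cardinals $\mu$, of the bounded properties $R(\omega,\mu;\mathord{<}\omega)$. One direction is immediate, since a sequence of covers each of cardinality $\leq\mu$ is a special case of an arbitrary sequence of covers. For the converse I would take a sequence $(\mathcal U_n)_{n\in\omega}$ of arbitrary open covers, fix an infinite cardinal $\mu$ with $|\mathcal U_n|\leq\mu$ for every $n$ (for instance $\mu=\max(\omega,\sup_n|\mathcal U_n|)$), and apply $R(\omega,\mu;\mathord{<}\omega)$ to recover the finite selections witnessing $R(\omega,\infty;\mathord{<}\omega)$.

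For (1)$\Leftrightarrow$(2), observe that condition (2) is, verbatim, the assertion that for every infinite cardinal $\mu$ condition (3) of Theorem \ref{mengthm} holds with $\lambda=\kappa=\omega$ and $I={^\omega([\mu]^{\mathord{<}\omega})}$; here the hypotheses of that theorem are met, since $\kappa=\omega$ and $\mu$ are infinite. By Theorem \ref{mengthm}, for each such $\mu$ this is equivalent to $R(\omega,\mu;\mathord{<}\omega)$. Quantifying over $\mu$ and invoking the first reduction turns the resulting conjunction into $R(\omega,\infty;\mathord{<}\omega)$, which is the Menger property.

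For (1)$\Leftrightarrow$(3), I would first check that the Menger property entails the Lindel\"of property: feeding the constant sequence $\mathcal U_n=\mathcal U$ into $R(\omega,\infty;\mathord{<}\omega)$ yields finite subfamilies $\mathcal V_n\subseteq\mathcal U$ whose union is a countable subcover of $\mathcal U$. The remaining clause of (3) is exactly condition (3) of Theorem \ref{mengthm} in the case $\mu=\omega$, hence equivalent, by that theorem, to $R(\omega,\omega;\mathord{<}\omega)$, the Menger property for countable covers. Thus (3) says that $X$ is Lindel\"of and satisfies $R(\omega,\omega;\mathord{<}\omega)$, which is equivalent to the Menger property by the remark following Definition \ref{threecardmenger}. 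If a self-contained argument for that remark is wanted, only the nontrivial direction needs comment: given arbitrary covers $(\mathcal U_n)_{n\in\omega}$, Lindel\"ofness provides countable subcovers $\mathcal U_n'\subseteq\mathcal U_n$, to which $R(\omega,\omega;\mathord{<}\omega)$ applies, and the finite selections so obtained witness $R(\omega,\infty;\mathord{<}\omega)$.

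I do not anticipate a genuine obstacle, since all the substantive work is carried by Theorem \ref{mengthm}. The only points requiring care are verifying, in each application, that the cardinal hypotheses of that theorem hold (here automatically, as $\kappa=\omega$ and every $\mu$ in play is infinite), and carrying out cleanly the two reductions: between the parameter $\infty$ and the family of all infinite $\mu$, and, via Lindel\"ofness, between arbitrary and countable covers.
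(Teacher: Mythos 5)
Your proof is correct and follows exactly the route the paper intends: the corollary is stated without proof as an immediate consequence of Theorem \ref{mengthm} with $\lambda=\kappa=\omega$, combined with the remark in Definition \ref{threecardmenger} that a space is Menger if and only if it is Lindel\"of and satisfies $R(\omega,\omega;\mathord{<}\omega)$. Your two bookkeeping reductions (between $\mu=\infty$ and the family of all infinite $\mu$, and between arbitrary and countable covers via Lindel\"ofness) are precisely the implicit steps, and both are carried out correctly.
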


In order to avoid trivial exceptions, when dealing with products, we shall always
assume that all topological spaces under consideration are non empty.

\begin{cor} \labbel{corprodd}
Suppose that  $\mu$ and $\kappa$
are infinite cardinals, $\lambda$
is a nonzero cardinal, and let $I={ ^ \lambda  ([\mu] ^{\mathord{<} \kappa })}$.
Then, for every product $X= \prod _{j \in J} X_j $ of topological spaces, the following conditions are equivalent. 
\begin{enumerate}   
\item 
$X$ satisfies $R( \lambda, \mu; \mathord{<}\kappa )$.
\item
For every choice of sequences $\langle  x _{f,j} \mid  f \in I  \rangle $ in $X_j$
(one sequence for each $j \in J$), 
 there is a functionally regular ultrafilter $D$ over $I$ such that,
for every $j \in J$, the sequence
$\langle  x _{f,j}\mid  f \in I  \rangle $ has some $D$-limit point in $X_j$
(here $D$ is the same for all sequences).
\item 
Every subproduct of $X$ with $\leq 2 ^{2 ^{|I|} } $ factors
(that is, every product $Y= \prod _{j \in J'} X_j $ 
with $J' \subseteq J$ and $|J'| \leq 2 ^{2 ^{|I|} }$)
satisfies $R( \lambda, \mu; \mathord{<}\kappa )$.
 \end{enumerate} 
 \end{cor}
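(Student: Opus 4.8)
The plan is to treat the equivalence (1)$\Leftrightarrow$(2) as the engine of the proof, deriving it directly from Theorem \ref{mengthm} together with the fact that $D$-limit points in a product are computed coordinatewise. Recall this standard fact: if $x=(x_j)_{j\in J}\in X$, then $x$ is a $D$-limit point of a sequence $\langle x_f \mid f\in I\rangle$ (with $x_f=(x_{f,j})_{j\in J}$) if and only if, for every $j\in J$, the point $x_j$ is a $D$-limit point of $\langle x_{f,j}\mid f\in I\rangle$ in $X_j$. The forward implication holds because each projection is continuous and continuous maps preserve $D$-limit points; the converse holds because a basic neighborhood of $x$ constrains only finitely many coordinates and $D$ is closed under finite intersections.

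Granting this, (1)$\Leftrightarrow$(2) is immediate. For (1)$\Rightarrow$(2), given coordinate sequences $\langle x_{f,j}\mid f\in I\rangle$, I would assemble the product sequence $\langle x_f\mid f\in I\rangle$; then Theorem \ref{mengthm}, (1)$\Rightarrow$(3), produces a functionally regular $D$ and a $D$-limit point $x$ of $\langle x_f\rangle$ in $X$, and projecting gives a $D$-limit point of each coordinate sequence. For (2)$\Rightarrow$(1), given an arbitrary sequence in $X$, I would apply (2) to its coordinate sequences to obtain a single functionally regular $D$ giving a $D$-limit point $x_j$ in each $X_j$; the coordinatewise criterion then makes $x=(x_j)_j$ a $D$-limit point in $X$, and Theorem \ref{mengthm}, (3)$\Rightarrow$(1), yields $R(\lambda,\mu;\mathord{<}\kappa)$.

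For (1)$\Rightarrow$(3) I would use that $R(\lambda,\mu;\mathord{<}\kappa)$ passes to continuous surjective images: if $g\colon X\to Y$ is a continuous surjection and $(\mathcal U_\alpha)$ are open covers of $Y$ with $|\mathcal U_\alpha|\le\mu$, then pulling back along $g$, selecting by $R$ on $X$, and pushing the chosen preimages back to $\mathord{<}\kappa$ members of each $\mathcal U_\alpha$ yields the required selection on $Y$ (surjectivity guarantees that the pushed-forward families still cover). By the standing nonemptiness convention every projection onto a subproduct is surjective, so every subproduct of $X$—in particular every one with at most $2^{2^{|I|}}$ factors—inherits $R(\lambda,\mu;\mathord{<}\kappa)$.

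The substantive direction, and the main obstacle, is (3)$\Rightarrow$(1); here the bound $2^{2^{|I|}}$ enters decisively. I would argue through condition (2), which by the first step suffices. Fix coordinate sequences $\langle x_{f,j}\mid f\in I\rangle$ and set
\[
\mathcal A_j=\{D : D \text{ functionally regular ultrafilter over } I \text{ with a } D\text{-limit point of } \langle x_{f,j}\mid f\in I\rangle \text{ in } X_j\}.
\]
Each single factor $X_j$ is a one-factor subproduct, hence satisfies $R(\lambda,\mu;\mathord{<}\kappa)$ by (3), so Theorem \ref{mengthm} makes every $\mathcal A_j$ nonempty. Moreover, for any $J'\subseteq J$ with $|J'|\le 2^{2^{|I|}}$, the subproduct $\prod_{j\in J'}X_j$ satisfies $R(\lambda,\mu;\mathord{<}\kappa)$ by (3), hence satisfies condition (2) by the already-proved equivalence; applying (2) to the relevant coordinate sequences produces a single functionally regular $D\in\bigcap_{j\in J'}\mathcal A_j$, so $\bigcap_{j\in J'}\mathcal A_j\neq\emptyset$ for every such $J'$. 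Finally I would invoke the cardinal bound: there are at most $2^{2^{|I|}}$ ultrafilters over $I$, so all the $\mathcal A_j$ lie inside a set of size $\le 2^{2^{|I|}}$. If $\bigcap_{j\in J}\mathcal A_j$ were empty, then for each functionally regular ultrafilter $D$ one could pick $j(D)\in J$ with $D\notin\mathcal A_{j(D)}$; the set $J''=\{\,j(D)\,\}$ has size $\le 2^{2^{|I|}}$ and satisfies $\bigcap_{j\in J''}\mathcal A_j=\emptyset$ (any element of the intersection would be a functionally regular ultrafilter $D_0$ omitted at $j(D_0)\in J''$), contradicting the previous step. Hence $\bigcap_{j\in J}\mathcal A_j\neq\emptyset$, which is exactly condition (2) for $X$, and (2)$\Rightarrow$(1) finishes the argument. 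The delicate point throughout is keeping a single ultrafilter common to all coordinates; the value $2^{2^{|I|}}$ is precisely the count of ultrafilters over $I$ that allows a coordinatewise witness to be amalgamated into one working for the whole product.
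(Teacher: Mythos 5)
Your proposal is correct and takes essentially the same route as the paper: the equivalence of (1) and (2) via Theorem \ref{mengthm} together with the coordinatewise computation of $D$-limit points, and your argument for (3) $\Rightarrow$ (1) --- choosing for each functionally regular ultrafilter $D$ a factor $j(D)$ at which $D$ fails and bounding the resulting index set by the number $2^{2^{|I|}}$ of ultrafilters over $I$ --- is precisely the contrapositive form of the paper's proof that the failure of (2) implies the failure of (3). The only cosmetic differences are your explicit sets $\mathcal A_j$ and your spelled-out verification (via continuous surjections onto subproducts) of the implication (1) $\Rightarrow$ (3), which the paper dismisses as trivial.
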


\begin{proof}
The equivalence of (1) and (2) is immediate from Theorem \ref{mengthm} and the fact that a sequence in a product
has a $D$-limit point if and only if each projection onto each factor has a $D$-limit point.

The equivalence of (1) and   (3) is similar to \cite[Theorem 2.3]{Sa}.
Cf. also \cite{Co}. First, notice that
(1) $\Rightarrow $   (3) is trivial. 
We shall complete the proof by showing that if (2) fails, then (3) fails.
Hence suppose that we can choose  sequences $\langle  x _{f,j} \mid  f \in I  \rangle $ in $X_j$, for every $j \in J$, such that, for every 
functionally regular ultrafilter $D$ over $I$, there is
 $j \in J$ such that  the sequence
$\langle  x _{f,j}\mid  f \in I  \rangle $ has no $D$-limit point in $X_j$.
Pick one such $j \in J$ for each functionally regular ultrafilter $D$ over $I$,
and let $J' \subseteq J$ be te set of the  $j$'s chosen in this way.
Since there are $\leq 2 ^{2 ^{|I|} } $ functionally regular ultrafilters over $I$,
we have that $|J'| \leq 2 ^{2 ^{|I|} }$. Now applying the already proved 
equivalence of (1) and (2) to $Y= \prod _{j \in J'} X_j $ we get that
$Y$ does not satisfy $R( \lambda, \mu; \mathord{<}\kappa )$.
\end{proof}

It is not clear whether $ 2 ^{2 ^{|I|} } $
is the best possible bound in Condition (3) in Corollary \ref{corprodd},
in general.
As we shall see in Corollary \ref{corprod} below,
in the parallel situation in which we allow arbitrary repetitions of factors
from a given family, we can indeed get a lower bound.
Of course, in certain particular cases, 
a better bound in Corollary \ref{corprodd} can be obtained.
For example, it is immediate from Proposition \ref{mengvslm} 
below that a product is Menger
if and only if all but finitely many factors are compact, and
the product of the noncompact factors (if any) is Menger.
In particular, a product is Menger if and only if all  countable subproducts are Menger.
As another example, it follows easily from \cite{topappl} 
that a product is Lindel\"of if and only if all subproducts with
$\leq \omega_1$ factors are Lindel\"of.

Some of the  results from \cite{Sa, Ca} 
mentioned in the introduction 
 can be obtained 
 as a particular case of Theorem \ref{mengthm},
by  taking $\lambda=1$.
In fact, condition $R( 1, \mu; \mathord{<} \kappa )$ 
is nothing but a restatement of 
\emph{$[ \kappa, \mu ]$-compactness}.
Moreover, when $\lambda=1$,
an ultrafilter $D$ over  
${ ^ \lambda  ([\mu] ^{\mathord{<} \kappa })} \cong [\mu] ^{\mathord{<} \kappa }$
is functionally regular if and only if 
it \emph{covers} $\mu$, that is,
$\{ z \in [\mu] ^{\mathord{<} \kappa } \mid \beta \in z\} \in D$,
for every $\beta \in \mu$.   
An ultrafilter $D$ over $H$ is \emph{$( \kappa, \mu ) $-regular}
if and only if there is a function $f: H \to [\mu] ^{\mathord{<} \kappa }$ 
such that $f(D)$ covers $\mu$, where 
$f(D)= \{ z \in [\mu] ^{\mathord{<} \kappa } \mid f ^{-1} (z) \in D\} $.
Thus the results in \cite[Section 3]{Ca} are the particular case
of Theorem \ref{mengthm}
when $\lambda=1$.
See \cite{mru} for a survey on regularity of ultrafilters and applications
(notice that $[\mu] ^{\mathord{<} \kappa }$ is denoted by $S _ \kappa ( \mu)$ 
in \cite{mru}).

\section{Product theorems} \labbel{prodthm}

 From Theorem \ref{mengthm}  and the general theory 
developed in \cite{sssr} about 
preservation
of properties under products, we get
that all powers of some space $X$ satisfy
$R( \lambda, \mu; \mathord{<}\kappa )$
if and only if $X$ is $D$-compact, for some 
functionally regular ultrafilter over 
${ ^ \lambda  ([\mu] ^{\mathord{<} \kappa })}$.
However, in this particular case, a stronger result can be obtained
in a direct way.

 We first need an easy proposition. As in Definition \ref{threecardmenger},
it is sometimes convenient to  allow the possibility that
 $\mu= \infty$. Notice that, in this sense, 
$R( 1, \infty; \mathord{<}\kappa )$ is
what is usually called \emph{final $\kappa$-compactness},
which we shall also call  $[ \kappa, \infty ]$-compactness. 
 
If $X=\prod _{ \alpha \in \lambda }  X_ \alpha $, we shall denote by
$\pi _ \alpha $ the natural projection onto the $\alpha ^{\rm th} $ component.  

\begin{proposition} \labbel{mengvslm} 
Let $\lambda$ and $\kappa$ be nonzero cardinals,
and $\mu$ be a nonzero cardinal or $\infty$.
If, for every $\alpha \in \lambda $,
$X_ \alpha $ is a space which is not 
 $[ \kappa, \mu ]$-compact,
then 
$X=\prod _{ \alpha \in \lambda }  X_ \alpha $ does not satisfy
 $R( \lambda, \mu; \mathord{<}\kappa )$.
\end{proposition}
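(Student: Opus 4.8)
The plan is to construct, directly from witnesses to the non-compactness of the factors, a single sequence of covers of $X$ that defeats $R(\lambda,\mu;\mathord{<}\kappa)$; no ultrafilter machinery is needed, in keeping with the proposition being ``easy''. Recall that, as noted above, $[\kappa,\mu]$-compactness is exactly $R(1,\mu;\mathord{<}\kappa)$, so the hypothesis that $X_\alpha$ is not $[\kappa,\mu]$-compact means that $X_\alpha$ carries an open cover $\mathcal U_\alpha$ with $|\mathcal U_\alpha|\leq\mu$ (no cardinality restriction when $\mu=\infty$) admitting no subcover of cardinality $<\kappa$. First I would fix, for each $\alpha\in\lambda$, one such bad cover $\mathcal U_\alpha$ of $X_\alpha$, and pull it back along the projection by setting $\mathcal W_\alpha=\{\pi_\alpha^{-1}(U)\mid U\in\mathcal U_\alpha\}$. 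Each $\mathcal W_\alpha$ is an open cover of $X$ (preimages of opens are open, and $\mathcal U_\alpha$ already covers $X_\alpha$), with $|\mathcal W_\alpha|\leq|\mathcal U_\alpha|\leq\mu$, so $(\mathcal W_\alpha)_{\alpha\in\lambda}$ is an admissible input for $R(\lambda,\mu;\mathord{<}\kappa)$.

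Next I would show that this input cannot be selected from. Suppose, for each $\alpha$, that $\mathcal V_\alpha\subseteq\mathcal W_\alpha$ with $|\mathcal V_\alpha|<\kappa$ is given. For every $W\in\mathcal V_\alpha$ choose some $U\in\mathcal U_\alpha$ with $W=\pi_\alpha^{-1}(U)$, and let $\mathcal U'_\alpha$ be the set of these chosen $U$'s; then $\mathcal U'_\alpha\subseteq\mathcal U_\alpha$, $|\mathcal U'_\alpha|\leq|\mathcal V_\alpha|<\kappa$, and $\bigcup\mathcal V_\alpha=\pi_\alpha^{-1}\bigl(\bigcup\mathcal U'_\alpha\bigr)$. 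Because $\mathcal U_\alpha$ witnesses the failure of $[\kappa,\mu]$-compactness, the subcollection $\mathcal U'_\alpha$ is not a cover of $X_\alpha$, so I may pick a point $y_\alpha\in X_\alpha\setminus\bigcup\mathcal U'_\alpha$. Let $y\in X$ be the point with $\pi_\alpha(y)=y_\alpha$ for all $\alpha$ (this is where the standing nonemptiness hypothesis and the axiom of choice enter). For any $\alpha$ and any $W=\pi_\alpha^{-1}(U)\in\mathcal V_\alpha$ one has $y\in W$ iff $y_\alpha\in U$, which fails since $y_\alpha\notin\bigcup\mathcal U'_\alpha$; hence $y\notin\bigcup_{\alpha\in\lambda}\mathcal V_\alpha$, and this union is not a cover of $X$. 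As the selection $(\mathcal V_\alpha)_{\alpha\in\lambda}$ was arbitrary, $X$ fails $R(\lambda,\mu;\mathord{<}\kappa)$.

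The argument is essentially routine, so the ``main obstacle'' is only bookkeeping: one must keep the two bounds $|\mathcal V_\alpha|<\kappa$ and $|\mathcal U'_\alpha|<\kappa$ aligned through the possibly non-injective correspondence $U\mapsto\pi_\alpha^{-1}(U)$, which is why I pass to a chosen $\mathcal U'_\alpha$ with $|\mathcal U'_\alpha|\leq|\mathcal V_\alpha|$ rather than take all preimages. The only point deserving a remark is the uniform treatment of $\mu=\infty$: there the pulled-back covers $\mathcal W_\alpha$ may be large, but $R(\lambda,\infty;\mathord{<}\kappa)$ imposes no cardinality bound on the input covers, so the same construction applies verbatim, now using $[\kappa,\infty]$-compactness (final $\kappa$-compactness) in place of $[\kappa,\mu]$-compactness.
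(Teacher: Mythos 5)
Your proposal is correct and follows essentially the same route as the paper: you pull back a bad cover of each factor along the projection $\pi_\alpha$ (the paper writes these pullbacks as boxes $\prod_\delta Y_\delta$, which are literally the same sets as your $\pi_\alpha^{-1}(U)$), and then, given any selection of subfamilies of size $<\kappa$, choose for each $\alpha$ an uncovered point $y_\alpha\in X_\alpha$ and assemble them into a point of the product lying outside the union. Your extra bookkeeping about the possibly non-injective correspondence $U\mapsto\pi_\alpha^{-1}(U)$ and the explicit treatment of $\mu=\infty$ are sound refinements of the same argument, not a different approach.
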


\begin{proof} 
By assumption,
for every $\alpha \in \lambda $,
there is an open  cover $\mathcal W_ \alpha  $
of $X_ \alpha $ such that 
 $|\mathcal W_ \alpha  | \leq \mu$,
and no subset of 
 $\mathcal W_ \alpha  $
of cardinality $<\kappa$ 
 is a cover of  $X_ \alpha  $.
For every $\alpha \in \lambda$,
let $ \mathcal  U_ \alpha $ be the open 
cover of $X$
consisting of all the products of the form
$ \prod _{ \delta  \in \lambda } Y_ \delta  $,
where all $Y_ \delta$'s equal $X_ \delta $,
except for $\delta= \alpha $, in which case  
we require that  $Y _ \delta \in \mathcal W_ \alpha $. 
The family 
 $ (\mathcal  U_ \alpha ) _{ \alpha \in \lambda } $
is clearly a counterexample to $R( \lambda, \mu; \mathord{<}\kappa )$.
We have to show that, for every choice 
of subfamilies 
$\mathcal V_ \alpha
\subseteq \mathcal U_ \alpha$ such that 
$| \mathcal V_ \alpha| <\kappa $ 
for every $\alpha \in \lambda $,
we have that $\bigcup_{ \alpha \in \lambda } \mathcal V_ \alpha$
fails to be a cover of $X$.
Indeed, for every $\alpha \in \lambda $,
since no subset of 
 $\mathcal W_ \alpha  $
of cardinality $<\kappa$ 
 is a cover of  $X_ \alpha  $, 
there is $x_ {\alpha} \in X_ {\alpha}$
such that, whenever 
$x \in X$, and $\pi _ {\alpha} (x) = x_ {\alpha} $,
then $x $ belongs to no member of $  \mathcal V_ {\alpha}$.
By choosing an $x_ \alpha $ as above, for each $\alpha \in \lambda $,
and by taking $x= (x _ \alpha ) _{ \alpha \in \lambda } \in X$,
we get   that $x$ belongs to no member of 
$\bigcup_{ \alpha \in \lambda } \mathcal V_ \alpha$;
thus this last family is not a cover of $X$. 
\end{proof}

In the statement of the next corollary the expression
``product of members of a family $\mathcal F$''  
is always intended in the sense that repetitions are allowed in the 
product, that is, the same space can occur multiple times. 

For $\mu$ a cardinal, we let
$\mu ^{< \kappa } = \sup _{ \kappa ' < \kappa } \mu ^{ \kappa '}  $,
that is, 
$\mu ^{< \kappa } = |[\mu] ^{< \kappa } | $.

\begin{corollary} \labbel{corprod}
Suppose that  $\mu$ and $\kappa$
are infinite cardinals, $\lambda$
is a nonzero cardinal,  and let 
 $\nu= \max \{ \lambda, 2 ^{2 ^{(\mu ^{< \kappa }) } } \} $. 
 For every family $\mathcal F$ of topological spaces, the following conditions are equivalent. 
\begin{enumerate}
   \item 
All products of members of $\mathcal F$  satisfy $R( \lambda, \mu; \mathord{<}\kappa )$.
\item 
All products of $ \leq \nu $ members of $\mathcal F$ satisfy $R( \lambda, \mu; \mathord{<}\kappa )$.
\item
All products of $ \leq \nu $ members of $\mathcal F$ are $[  \kappa, \mu ]$-compact. 
\item
There is a $( \kappa, \mu)$-regular  ultrafilter $D$ 
(which can be chosen over $[\mu] ^{< \kappa } $)
such that every member of
$\mathcal F$ is $D$-compact.
\item
All products of members of $\mathcal F$ are $[ \kappa, \mu  ]$-compact. 
\item
All products of members of $\mathcal F$ satisfy $R( \lambda', \mu; \mathord{<}\kappa )$, for every nonzero cardinal $\lambda'$.
  \end{enumerate}

In particular, every product of members of $\mathcal F$ satisfies
the Menger property for countable covers 
if and only if so does any product of
 $ \leq 2 ^{2 ^{ \omega } } $ factors, if and only if  
 every product of members of $\mathcal F$ is
countably compact.
\end{corollary}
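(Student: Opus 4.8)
The plan is to prove the six conditions equivalent via the cycle $(1)\Rightarrow(2)\Rightarrow(3)\Rightarrow(4)\Rightarrow(5)\Rightarrow(6)\Rightarrow(1)$, and then to obtain the concluding clause by specializing $\lambda=\mu=\kappa=\omega$. Two implications are immediate: $(1)\Rightarrow(2)$, since a product of $\le\nu$ members is in particular a product, and $(6)\Rightarrow(1)$, since $(6)$ includes the instance $\lambda'=\lambda$. For $(2)\Rightarrow(3)$ I would argue contrapositively: if some $Y=\prod_{j\in J'}X_j$ with $|J'|\le\nu$ were not $[\kappa,\mu]$-compact, then in the power $Y^{\lambda}=\prod_{\alpha\in\lambda}Y$ every factor is the non-$[\kappa,\mu]$-compact space $Y$, so Proposition \ref{mengvslm} gives that $Y^{\lambda}$ fails $R(\lambda,\mu;\mathord{<}\kappa)$. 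But $Y^{\lambda}$ is a product of $\lambda\cdot|J'|\le\nu$ members of $\mathcal F$ (note $\nu$ is infinite and $\nu\ge\lambda$), so $(2)$ would force $R(\lambda,\mu;\mathord{<}\kappa)$ for it, a contradiction.

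The heart of the argument is $(3)\Rightarrow(4)$, which I expect to be the main obstacle. Assuming $(4)$ fails, I would use choice to select, for each ultrafilter $D$ over $[\mu]^{\mathord{<}\kappa}$ that covers $\mu$, a member $X_D\in\mathcal F$ and a sequence $\langle x^{D}_{z}\mid z\in[\mu]^{\mathord{<}\kappa}\rangle$ in $X_D$ with no $D$-limit point. Since the number of ultrafilters over $[\mu]^{\mathord{<}\kappa}$ is at most $2^{2^{|[\mu]^{\mathord{<}\kappa}|}}=2^{2^{(\mu^{\mathord{<}\kappa})}}\le\nu$, the product $Y=\prod_{D}X_D$ has $\le\nu$ factors, so by $(3)$ it is $[\kappa,\mu]$-compact, i.e.\ satisfies $R(1,\mu;\mathord{<}\kappa)$. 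Applying Theorem \ref{mengthm} in the case $\lambda=1$ (where $I\cong[\mu]^{\mathord{<}\kappa}$ and functional regularity means covering $\mu$) to the diagonal sequence $y_z=(x^{D}_z)_{D}$ in $Y$ produces a covering ultrafilter $D^{*}$ and a $D^{*}$-limit point of $\langle y_z\rangle$ in $Y$; projecting onto the $D^{*}$-coordinate, $\langle x^{D^{*}}_z\mid z\rangle$ then has a $D^{*}$-limit point in $X_{D^{*}}$, contradicting its choice. Hence some covering, thus $(\kappa,\mu)$-regular, ultrafilter $D$ over $[\mu]^{\mathord{<}\kappa}$ makes every member of $\mathcal F$ $D$-compact. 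This is the step that pins down the exact bound $2^{2^{(\mu^{\mathord{<}\kappa})}}$ and that forces one to invoke Theorem \ref{mengthm} in its delicate $\lambda=1$ form.

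For $(4)\Rightarrow(5)$ I would invoke that $D$-compactness is preserved under arbitrary products (Section \ref{ufconv}), so every product of members of $\mathcal F$ is $D$-compact, together with the classical fact that a $D$-compact space for a $(\kappa,\mu)$-regular $D$ is $[\kappa,\mu]$-compact (Caicedo; equivalently the direction $(3)\Rightarrow(1)$ of Theorem \ref{mengthm} with $\lambda=1$). For $(5)\Rightarrow(6)$ I would use that $[\kappa,\mu]$-compactness is $R(1,\mu;\mathord{<}\kappa)$ and that any space satisfying it satisfies $R(\lambda',\mu;\mathord{<}\kappa)$ for every nonzero $\lambda'$ — simply extract a $\mathord{<}\kappa$ subcover from the cover at one fixed index and take empty selections elsewhere — applying this to each product. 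This closes the cycle. Finally, for the concluding clause I set $\lambda=\mu=\kappa=\omega$, so that $\mu^{\mathord{<}\kappa}=\omega$, hence $\nu=2^{2^{\omega}}$, while $R(\omega,\omega;\mathord{<}\omega)$ is the Menger property for countable covers and $[\omega,\omega]$-compactness is countable compactness; the equivalences $(1)\Leftrightarrow(2)\Leftrightarrow(5)$ then read off precisely the three asserted conditions.
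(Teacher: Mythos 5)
Your proof is correct, and its skeleton coincides with the paper's: the trivial implications (1)$\Rightarrow$(2) and (6)$\Rightarrow$(1), the step (2)$\Rightarrow$(3) via Proposition \ref{mengvslm} applied to the power $X^{\lambda}$ (reindexed as a product of $\leq\nu$ members of $\mathcal F$, using that $\nu$ is infinite and $\geq\lambda$), and the trivial (5)$\Rightarrow$(6) all match the paper exactly. The one divergence is in the block (3)$\Leftrightarrow$(4)$\Leftrightarrow$(5): the paper simply cites Caicedo \cite[Section 3]{Ca} for this equivalence, remarking that it can also be obtained from \cite{sssr} together with the particular case $\lambda=1$ of Theorem \ref{mengthm} and the remarks at the end of Section \ref{mengeruf} --- and your argument is precisely that alternative route, carried out in full. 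Your (3)$\Rightarrow$(4) is the Ginsburg--Saks-style diagonal argument: for each of the $\leq 2^{2^{(\mu^{<\kappa})}}$ covering ultrafilters $D$ over $[\mu]^{\mathord{<}\kappa}$ (these are exactly the functionally regular ultrafilters when $\lambda=1$, and they are $(\kappa,\mu)$-regular via the identity function) choose a witness $X_D\in\mathcal F$ and a sequence with no $D$-limit point, apply Theorem \ref{mengthm} at $\lambda=1$ to the diagonal sequence in $\prod_D X_D$ --- legitimate under (3) since this product has $\leq\nu$ factors and repetitions are allowed by the paper's convention --- and project onto the $D^{*}$-coordinate for the resulting covering ultrafilter $D^{*}$, obtaining a contradiction; this also yields the parenthetical claim that $D$ can be taken over $[\mu]^{\mathord{<}\kappa}$. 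Your version buys self-containedness and makes visible why the bound $2^{2^{(\mu^{<\kappa})}}$, hence $\nu$, suffices, at the cost of reproving a known result that the paper outsources; your (4)$\Rightarrow$(5) (preservation of $D$-compactness under products plus the classical fact that $D$-compactness for $(\kappa,\mu)$-regular $D$ implies $[\kappa,\mu]$-compactness) and the specialization $\lambda=\mu=\kappa=\omega$, giving $\mu^{<\kappa}=\omega$ and $\nu=2^{2^{\omega}}$ for the concluding clause, are likewise sound and as intended.
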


\begin{proof}
The implications 
(1) $\Rightarrow $  (2)
and
(6) $\Rightarrow $  (1) are trivial.

(2) $\Rightarrow $  (3) 
Let $X= \prod _{j \in J} X_j $
be a product of    $ \leq \nu $ members of $\mathcal F$.
Since $\lambda \leq \nu$, and  $\nu$ is infinite, 
then also $X^ \lambda $ is (can be reindexed as)
a product  of 
$ \leq \nu $ members of $\mathcal F$.
By (2), $X^ \lambda $ satisfies $R( \lambda, \mu; \mathord{<}\kappa )$, and,
 by Proposition \ref{mengvslm} with all $X_ \alpha $'s equal to $X$, 
we get that $X$  
is $[\mu, \kappa ]$-compact.
We have proved that (3) holds.

The equivalence of (3), (4) and (5) comes from \cite[Section 3]{Ca}.  
It can be also obtained from \cite{sssr}, and the particular case of Theorem \ref{mengthm} when $\lambda=1$, using the remarks
at the end of Section \ref{mengeruf}.

(5) $\Rightarrow $  (6) follows from the trivial fact that
$[ \kappa , \mu]$-compactness
implies  $R( \lambda', \mu; \mathord{<}\kappa )$,
for every nonzero cardinal $\lambda'$.
 \end{proof}

As in \cite{Ca} or \cite[Remark 2.4]{tapp2},
 there are cases in which the value of $\nu$ in Corollary \ref{corprod}
can be improved. 
Just to state a simple example, we can have  $\nu= \max \{ \lambda, 2 ^{2 ^{ \mu   } } \} $,
when $\mu= \kappa $ is a regular cardinal or, more generally,
when $\cf \mu \geq \kappa $. 

It is trivial to see that if $X$ and $Y$ are topological spaces, $f:X \to Y$ is a continuous and surjective function,
and  $X$
satisfies 
 $R( \lambda, \mu; \mathord{<}\kappa )$,
then $Y$
satisfies 
 $R( \lambda, \mu; \mathord{<}\kappa )$, too.
In particular, if a product 
satisfies  $R( \lambda, \mu; \mathord{<}\kappa )$,
then all subproducts and all factors satisfy it.
Thus from Proposition \ref{mengvslm} 
we get that if some product 
$X=\prod _{ j \in J }  X_ j $ satisfies
$R( \lambda, \mu; \mathord{<}\kappa )$, then all but at most 
$< \lambda $ factors  
are
 $[ \kappa, \mu ]$-compact.
In particular, if a product satisfies the  Menger property
for countable covers,
then all but finitely many factors are countably compact
(this extends \cite[Proposition 1]{Ar}, where a different terminology
is used, and those spaces  we call Menger here are called \emph{Hurewicz} there).

The above remarks
can be improved in combination with results from
  \cite{topappl},
as we shall show in the next proposition.

We let $  \kappa ^{+n}  $ be the $n ^{ \rm th} $ iterated successor
of $\kappa$, that is,  
$  \kappa ^{+0}= \kappa   $, and 
$  \kappa ^{+n+1} =  (\kappa ^{+n})^+  $.

\begin{proposition} \labbel{menglmtapp}
Suppose that 
$\lambda$ and $\kappa$ are infinite cardinals,
$\kappa$ is regular, 
$\mu$  is either an infinite cardinal or $\infty$, 
$ n  \in  \omega $,
and $\mu \geq \kappa ^{+n+1}$.

If some product $X=\prod _{ j \in J }  X_ j$   satisfies
 $R( \lambda, \mu; \kappa ^{+n} )$,
then 
$|\{ j \in J \mid X_j \text{ is not $[ \kappa, \mu]$-compact} \}| 
< \sup \{ \lambda, \kappa ^{+n+1} \}$.
 \end{proposition}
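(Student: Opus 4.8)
The plan is to argue by contraposition. Writing $\theta=\sup\{\lambda,\kappa^{+n+1}\}$ and $B=\{\,j\in J\mid X_j\text{ is not }[\kappa,\mu]\text{-compact}\,\}$, I will assume $|B|\geq\theta$ and produce a failure of $R(\lambda,\mu;\kappa^{+n})$, contradicting the hypothesis. The mechanism for transferring a local failure to $X$ is the observation recorded just before the statement: if a product satisfies $R(\lambda,\mu;\mathord{<}\kappa)$ then so do all of its subproducts (these being continuous images under the projections). Hence it suffices to exhibit one subproduct of $X$ that fails $R(\lambda,\mu;\kappa^{+n})$, since this forces $X$ itself to fail it. Note also that, by the conventions of Definition \ref{threecardmenger}, $R(\lambda,\mu;\kappa^{+n})=R(\lambda,\mu;\mathord{<}\kappa^{+n+1})$, so the concrete target is a subproduct violating $R(\lambda,\mu;\mathord{<}\kappa^{+n+1})$.

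Since $\lambda$ and $\kappa^{+n+1}$ are infinite, $\theta=\lambda\cdot\kappa^{+n+1}$, so I can pick $B_0\subseteq B$ with $|B_0|=\theta$ and partition it as $B_0=\bigsqcup_{\alpha\in\lambda}B_\alpha$ with $|B_\alpha|=\kappa^{+n+1}$ for each $\alpha\in\lambda$. Setting $Y_\alpha=\prod_{j\in B_\alpha}X_j$, the subproduct $\prod_{j\in B_0}X_j$ is a reindexing of $\prod_{\alpha\in\lambda}Y_\alpha$. The crux, isolated below, is that each super-factor $Y_\alpha$ fails to be $[\kappa^{+n+1},\mu]$-compact. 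Granting this, I apply Proposition \ref{mengvslm} \emph{with $\kappa^{+n+1}$ substituted for its parameter $\kappa$}: since each of the $\lambda$ spaces $Y_\alpha$ is not $[\kappa^{+n+1},\mu]$-compact, their product $\prod_{\alpha\in\lambda}Y_\alpha$ does not satisfy $R(\lambda,\mu;\mathord{<}\kappa^{+n+1})=R(\lambda,\mu;\kappa^{+n})$. This is exactly the desired failure on the subproduct $\prod_{j\in B_0}X_j$, and the contradiction follows. In effect the bound $\theta$ is split into its two ``factors'': the $\lambda$ is absorbed by Proposition \ref{mengvslm}, while the $\kappa^{+n+1}$ is absorbed by collapsing $\kappa^{+n+1}$ genuine factors into a single super-factor $Y_\alpha$.

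The main obstacle is precisely this collapsing step, which is the case $\lambda=1$ of the proposition: \emph{a product of $\kappa^{+n+1}$ spaces, none of which is $[\kappa,\mu]$-compact, is not $[\kappa^{+n+1},\mu]$-compact}. This is where I expect to invoke the results of \cite{topappl}, and it is here that the standing hypotheses really enter: $\kappa$ regular (so that $\kappa^{+n+1}$ is a successor along which the relevant covers can be stratified) and $\mu\geq\kappa^{+n+1}$ (so that a cover of the product witnessing non-$[\kappa^{+n+1},\mu]$-compactness can be assembled from the $\kappa^{+n+1}$ bad covers while keeping cardinality $\leq\mu$). The naive cover of $Y_\alpha$ by one-coordinate pullbacks $\pi_j^{-1}(W)$, with $W$ ranging over a bad cover of $X_j$, only rules out subcovers of size $<\kappa$ and hence merely reproves non-$[\kappa,\mu]$-compactness; obtaining the stronger conclusion that no subcover of size $\leq\kappa^{+n}$ exists requires the finer iterated-successor combinatorics of \cite{topappl}, which is the genuinely hard input I would cite rather than reprove. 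Everything else — the passage to subproducts, the arithmetic $\theta=\lambda\cdot\kappa^{+n+1}$, and the identity $R(\lambda,\mu;\kappa^{+n})=R(\lambda,\mu;\mathord{<}\kappa^{+n+1})$ — is routine bookkeeping.
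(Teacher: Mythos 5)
Your overall route coincides with the paper's own proof, up to bookkeeping: the paper, instead of contraposing with $\theta=\sup\{\lambda,\kappa^{+n+1}\}$, first uses monotonicity of $R(\lambda,\mu;\kappa^{+n})$ in $\lambda$ to assume $\lambda\geq\kappa^{+n+1}$, and then, exactly as in your plan, partitions the bad indices into $\lambda$ blocks of cardinality $\kappa^{+n+1}$, collapses each block into a super-factor $Z_\delta$ that is not $[\kappa^{+n+1},\mu]$-compact, and finishes by applying Proposition \ref{mengvslm} with $\kappa^{+n+1}$ in place of its parameter $\kappa$. Your reductions (heredity of $R(\lambda,\mu;\mathord{<}\kappa)$ to subproducts via projections, the arithmetic $\theta=\lambda\cdot\kappa^{+n+1}$, and $R(\lambda,\mu;\kappa^{+n})=R(\lambda,\mu;\mathord{<}\kappa^{+n+1})$) are all correct.

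The one genuine gap is the collapsing lemma as you state it: ``a product of $\kappa^{+n+1}$ spaces, none of which is $[\kappa,\mu]$-compact, is not $[\kappa^{+n+1},\mu]$-compact.'' This is not what \cite[Theorem 23]{topappl} delivers, at least as the paper uses it: there the hypothesis is that no factor is $[\kappa,\kappa^{+n+1}]$-compact (with $\kappa$ regular), and the conclusion is failure of $[\kappa^{+n+1},\kappa^{+n+1}]$-compactness. A factor can fail $[\kappa,\mu]$-compactness while being $[\kappa,\kappa^{+n+1}]$-compact --- its bad covers may occur only at cardinalities above $\kappa^{+n+1}$ --- and for such factors Theorem 23 says nothing, so citing \cite{topappl} as a black box containing your lemma verbatim does not close the argument. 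The missing bridge is the splitting used in the paper: a space is $[\kappa,\mu]$-compact if and only if it is both $[\kappa,\kappa^{+n}]$-compact and $[\kappa^{+n+1},\mu]$-compact. With this, your lemma is proved by a dichotomy inside each block: either some factor of the block already fails $[\kappa^{+n+1},\mu]$-compactness, in which case the block product fails it trivially (project onto that factor), or every factor of the block fails $[\kappa,\kappa^{+n}]$-compactness, hence fails $[\kappa,\kappa^{+n+1}]$-compactness, and then Theorem 23 yields failure of $[\kappa^{+n+1},\kappa^{+n+1}]$-compactness, hence of $[\kappa^{+n+1},\mu]$-compactness since $\mu\geq\kappa^{+n+1}$. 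The paper performs this same dichotomy globally, before partitioning: either $\lambda$-many factors fail $[\kappa^{+n+1},\mu]$-compactness, in which case Proposition \ref{mengvslm} applies at once with no input from \cite{topappl}, or $\lambda$-many fail $[\kappa,\kappa^{+n}]$-compactness, in which case it partitions and invokes Theorem 23. Doing it per block, as your organization requires, is equally correct --- but it must actually be done; as written, your proof silently strengthens the hypothesis under which the cited theorem applies.
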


\begin{proof}
Since, for $\lambda' \geq \lambda $,
 $R( \lambda, \mu; \kappa)$ implies 
$R( \lambda', \mu; \kappa)$, it is no loss of generality
to assume that $ \lambda  \geq  \kappa ^{+n+1}$.

By the   remarks before the statement of the proposition, it is enough to prove that
if $Y=\prod _{ \gamma \in \lambda  }  Y_  \gamma $ is
a product of $\lambda$-many spaces, and no $Y _ \gamma $
is  $[ \kappa, \mu]$-compact, then $Y$ does not satisfy 
$R( \lambda, \mu; \kappa ^{+n} )$.
So suppose that $Y$ and the $Y_ \gamma $'s are  as above.
Since a topological space is 
 $[ \kappa, \mu]$-compact if and only if it is both
 $[ \kappa, \kappa ^{+n } ]$-compact and 
 $[ \kappa ^{+n+1} , \mu]$-compact,
and since $\lambda$ is infinite, then
either 
$|\{ \gamma \in \lambda  \mid Y_ \gamma \text{ is not }  
[ \kappa, \kappa ^{+n } ] \text{-\brfrt compact} \}| = \lambda $, or
$|\{ \gamma \in \lambda  \mid Y_ \gamma \text{ is not  }  
[ \kappa ^{+n+1} , \mu]  
\text{-\brfrt compact} \}| = \lambda $. 

In the latter case we have that $Y$ does not satisfy 
$R( \lambda, \mu; \kappa ^{+n} )$, by Proposition \ref{mengvslm}, 
and recalling that  $R( \lambda, \mu; \kappa ^{+n} )$ is the same as 
$R( \lambda, \mu; <\kappa ^{+n+1} )$.

In the former case, for simplicity and
without loss of generality, we can suppose that no 
$Y_ \gamma  $ is  $[ \kappa, \kappa ^{+n } ]$-compact,
hence not $[ \kappa, \kappa ^{+n+1 } ]$-compact.
Partition $\lambda$ into $\lambda$-many classes,
each of cardinality $ \kappa ^{+n+1 } $
(this is possible, since   $ \lambda  \geq  \kappa ^{+n+1}$).
Say, $\lambda= \bigcup _{ \delta \in \lambda } W_ \delta  $.
Then 
$Y=\prod _{ \gamma \in \lambda  }  Y_  \gamma 
\cong
\prod _{ \delta  \in \lambda  } ( \prod _{ \gamma \in W_ \delta }  Y_  \gamma )$.
By \cite[Theorem 23]{topappl}
with $\aleph_ \alpha = \kappa $ 
(here we are using the assumption that $\kappa$ is regular),
we get that, for each $\delta \in \lambda $,
$Z_ \delta =\prod _{ \gamma \in W_ \delta }  Y_  \gamma$
is not   
 $[ \kappa^{+n+1 }, \kappa ^{+n+1 } ]$-compact, 
hence, in particular, not  
$[ \kappa^{+n+1 }, \mu ]$-compact, since $\mu \geq \kappa ^{+n+1}$.
By applying Proposition \ref{mengvslm} to the product
$\prod _{ \delta  \in \lambda  } Z_ \delta $,
we get that $Y \cong \prod _{ \delta  \in \lambda  } Z_ \delta$  does not satisfy 
$R( \lambda, \mu; \kappa ^{+n} )$.
 \end{proof} 

\begin{corollary} \labbel{cpn}
If $\mu \geq \aleph _{n+1}$, and some product satisfies
 $R( \aleph _{n+1}, \mu; \aleph _{n}  )$,
then all but at most $\aleph _{n}$  factors 
are initially $\mu$-compact.
 \end{corollary}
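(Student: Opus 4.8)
The plan is to obtain this as the special case $\kappa = \omega$ of Proposition \ref{menglmtapp}; the corollary is essentially that proposition with all the iterated successors spelled out in aleph notation.

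First I would apply Proposition \ref{menglmtapp} with $\kappa = \omega$ and $\lambda = \aleph_{n+1}$. With $\kappa = \omega$ the iterated successors become $\kappa^{+n} = \aleph_n$ and $\kappa^{+n+1} = \aleph_{n+1}$, so the proposition's hypothesis $\mu \geq \kappa^{+n+1}$ is exactly the assumed $\mu \geq \aleph_{n+1}$, and the assumption that the product satisfies $R(\aleph_{n+1}, \mu; \aleph_n)$ is precisely $R(\lambda, \mu; \kappa^{+n})$ in these terms. The remaining hypotheses are immediate: $\kappa = \omega$ is infinite and regular, $\lambda = \aleph_{n+1}$ is infinite, and $\mu$ is an infinite cardinal (or $\infty$).

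Next I would read off the conclusion of the proposition, which gives
\[
|\{\, j \in J \mid X_j \text{ is not } [\omega, \mu]\text{-compact} \,\}| \;<\; \sup\{\aleph_{n+1}, \aleph_{n+1}\} \;=\; \aleph_{n+1},
\]
and a cardinal strictly smaller than $\aleph_{n+1}$ is at most $\aleph_n$. Finally I would note that $[\omega, \mu]$-compactness is, by definition, initial $\mu$-compactness (in the notation of Definition \ref{threecardmenger} this is $R(1, \mu; \mathord{<}\omega)$, the requirement that every open cover of cardinality $\leq \mu$ admit a finite subcover), so the exceptional set counted above is exactly the set of factors failing to be initially $\mu$-compact, and its size is at most $\aleph_n$.

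Since the whole argument is a direct specialization of the already-established Proposition \ref{menglmtapp}, there is no real obstacle; the only thing to verify carefully is the arithmetic of iterated successors, namely that $\omega^{+n} = \aleph_n$, so that the parameter $\aleph_n$ appearing in the corollary lines up with $\kappa^{+n}$ in the proposition.
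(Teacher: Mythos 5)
Your proof is correct and is exactly the argument the paper intends: Corollary \ref{cpn} is stated without proof as the immediate specialization of Proposition \ref{menglmtapp} to $\kappa=\omega$, $\lambda=\aleph_{n+1}$, where $\omega^{+n}=\aleph_n$, $\sup\{\aleph_{n+1},\aleph_{n+1}\}=\aleph_{n+1}$ gives the bound $\leq\aleph_n$, and $[\omega,\mu]$-compactness (i.e.\ $R(1,\mu;\mathord{<}\omega)$) is initial $\mu$-compactness. All hypotheses of the proposition ($\kappa$ infinite and regular, $\lambda$ infinite, $\mu\geq\kappa^{+n+1}$) are verified just as you checked them.
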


\section{The Rothberger property} \labbel{rothbsec}

We now compare the Menger and the Rothberger properties,
as far as filter convergence is  concerned.

Notice that in the proof of Theorem \ref{mengthm} we made an essential use of the
assumption that $\kappa$ is infinite. Indeed, for $\kappa=2$, say,
for the Rothberger property  $R( \omega  , \infty; 1 )$, a similar characterization
is not possible, at least, not  using ultrafilters.
However, some of the arguments in the proof of Theorem \ref{mengthm}
can be carried over, furnishing a characterization in terms of filters. 
The results from \cite{sssr} do apply also in this 
more general situation, but the fact that the characterization involves only filters
which are not maximal implies that the behavior with respect to products is 
entirely different. See Corollary \ref{corroth} below. 

\begin{proposition} \labbel{rothb}
Suppose that  $\mu$, $\kappa$
and $\lambda$
are nonzero cardinals, and let $I={ ^ \lambda  ([\mu] ^{\mathord{<} \kappa })}$.
Then, for every topological space $X$, the following conditions are equivalent. 
\begin{enumerate}   
\item 
$X$ satisfies $R( \lambda, \mu; \mathord{<}\kappa )$.
\item
For every sequence $\langle  x_f \mid  f \in I  \rangle $ of elements of $X$, there is a filter $F$
 over $I$ such that
  \begin{enumerate}  
  \item  
there is $\bar\alpha \in \lambda $ such that,
 for every $A \in F$ and every $\beta \in \mu$,  
$A \cap A _{\bar \alpha , \beta } \not= \emptyset  $,
where $A _{\bar \alpha , \beta }= \{ f \in I\mid \beta  \in f( \bar\alpha )\}$. 
\item
$\langle  x_f \mid  f \in I  \rangle $ has some $F$-limit point in $X$.
 \end{enumerate} 
\end{enumerate} 
 \end{proposition}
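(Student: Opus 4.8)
The plan is to route both implications through the reformulation of $R( \lambda, \mu; \mathord{<}\kappa )$ supplied by Lemma \ref{lemr}(3): namely, $X$ satisfies $R( \lambda, \mu; \mathord{<}\kappa )$ if and only if, for every sequence $\langle x_f \mid f \in I \rangle$, there is $\bar\alpha \in \lambda$ with $\bigcap _{ \beta \in \mu} E _{ \bar\alpha, \beta } \not= \emptyset$, where $E _{ \bar\alpha, \beta } = \overline{ \{ x_f \mid f \in I, \beta \in f( \bar\alpha ) \} }$. The argument is then a streamlined version of the proof of Theorem \ref{mengthm}. The essential point is that condition (a) demands only that members of $F$ \emph{meet} each $A _{\bar \alpha , \beta }$, rather than \emph{contain} it; consequently one never needs to pass to an ultrafilter, and the hypothesis that $\kappa$ be infinite (which was essential in Theorem \ref{mengthm}, in order to bound $|\bigcup Z_\alpha|$) can be dropped entirely.

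For (1) $\Rightarrow$ (2), I would fix a sequence $\langle x_f \mid f \in I \rangle$ and apply Lemma \ref{lemr}(3) to obtain $\bar\alpha \in \lambda$ and a point $x \in \bigcap _{ \beta \in \mu} E _{ \bar\alpha, \beta }$. Writing $B_U = \{ f \in I \mid x_f \in U\}$ for each neighborhood $U$ of $x$, I would take $F$ to be the filter generated by the base $\{ B_U \mid U \text{ a neighborhood of } x \}$; this is a filter base since $B_U \cap B_V = B _{U \cap V}$. Condition (b) is then immediate, as $B_U \in F$ for every $U$. For condition (a), any $A \in F$ contains some $B_U$, so it suffices to check $B_U \cap A _{\bar \alpha , \beta } \not= \emptyset$; and this holds precisely because $x \in E _{ \bar\alpha, \beta }$ forces every neighborhood $U$ of $x$ to contain some $x_f$ with $\beta \in f(\bar\alpha)$, i.e.\ some $f \in B_U \cap A _{\bar \alpha , \beta }$. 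In particular each $B_U$ is nonempty, so $F$ is a proper filter.

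For (2) $\Rightarrow$ (1), I would again verify Lemma \ref{lemr}(3). Given a sequence, condition (2) supplies a filter $F$, an index $\bar\alpha$, and an $F$-limit point $x$. I claim that $x \in E _{ \bar\alpha, \beta }$ for every $\beta \in \mu$, which yields $\bigcap _{ \beta \in \mu} E _{ \bar\alpha, \beta } \not= \emptyset$ and hence (1). Arguing by contradiction, if $x \notin E _{ \bar\alpha, \beta }$ for some $\beta$, then there is a neighborhood $U$ of $x$ disjoint from $\{ x_f \mid \beta \in f(\bar\alpha) \}$, which translates into $B_U \cap A _{\bar \alpha , \beta } = \emptyset$. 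But $x$ being an $F$-limit point gives $B_U \in F$, so condition (a) forces $B_U \cap A _{\bar \alpha , \beta } \not= \emptyset$ --- a contradiction.

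I do not anticipate a genuine obstacle here: the whole content lies in matching the ``meeting'' formulation of (a) with the closure definition of $E _{ \bar\alpha, \beta }$, which is exactly the feature that lets the neighborhood filter $\{ B_U \}$ serve directly, with no finite-intersection-property computation and no ultrafilter extension. The only step requiring a little care is confirming that $\{ B_U \}$ is a proper filter base, which follows from $B_U \cap B_V = B _{U \cap V}$ together with the nonemptiness of each $B_U$ noted above.
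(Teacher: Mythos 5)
Your proof is correct and is essentially identical to the paper's own argument: both directions are routed through Lemma \ref{lemr}(3), with the filter generated by the sets $B_U$ witnessing condition (2), and with the $F$-limit point shown to lie in each $E_{\bar\alpha,\beta}$ for the converse. You merely spell out details the paper leaves implicit (nonemptiness of the $B_U$'s and the explicit contradiction in (2)~$\Rightarrow$~(1), which the paper compresses into a single sentence).
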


  \begin{proof} 
If $X$ satisfies $R( \lambda, \mu; \mathord{<}\kappa )$,
and $\langle  x_f \mid  f \in I  \rangle $ is a sequence of elements of $X$, 
then, by Lemma \ref{lemr}(3), 
there are $\bar\alpha \in \lambda $, and 
$x \in \bigcap _{ \beta \in \mu}  
\overline{ \{ x_f \mid f \in I, \beta \in  f( \bar\alpha ) \} } $.
For any neighborhood $U$ of $x$ in $X$, let 
$B_U = \{ f \in I \mid x_f \in U\} $; 
then  the filter $F$ generated by
$ \{ B_U \mid U \text{ a neighborhood of } x \} $
witnesses (2). (Notice that
$B_U \cap B_V = B _{U \cap V} $, 
hence any $A \in F$ contains some $B_U$,
hence (2)(a) holds)

Conversely, if (2) holds, then any  $\bar\alpha$ given by (a)
and any point $x$ given by (b) are such that
$x \in \bigcap _{ \beta \in \mu}  
\overline{ \{ x_f \mid f \in I, \beta \in  f(\bar \alpha ) \} } $,
thus Condition (3) in Lemma \ref{lemr} holds.
\end{proof}

Of course, Proposition \ref{rothb} holds also when 
$\mu$ and $\kappa$ are infinite. The main point in Theorem \ref{mengthm} 
is 
the non trivial result
that, in the case when  $\mu$ and $\kappa$ are infinite, we can
 equivalently restrict ourselves to \emph{ultrafilters} satisfying Condition 
(2)(a) in Proposition \ref{rothb}. Indeed, if some ultrafilter $D$ 
over ${ ^ \lambda  ([\mu] ^{\mathord{<} \kappa })}$
satisfies (2)(a),
for $\bar\alpha \in \lambda $, 
then each  $A _{\bar \alpha , \beta }$ belongs to $D$,
since otherwise $A = I \setminus A _{\bar \alpha , \beta } \in D$,
contradicting (2)(a).
Thus an ultrafilter  $D$ satisfies (2)(a) if and only if it is functionally regular. That is, 
Condition (2) in Proposition \ref{rothb}, when restricted to ultrafilters, 
becomes Condition (3)
in Theorem \ref{mengthm}.
 
On the other hand, if  $\kappa = 2$  and $\mu \geq 2$,
in particular, when dealing with approximations $R( \omega , \mu; 1 )$ 
 to the Rothberger property, then there exists no ultrafilter which satisfies 
Condition (2)(a) in Proposition \ref{rothb}. Indeed, arguing as above, such an ultrafilter should
contain $A _{\bar \alpha , \beta }$, for every $\beta \in \mu$. 
However, when $\kappa=2$, we have that 
$A _{\bar \alpha , \beta_1 } \cap A _{\bar \alpha , \beta_2 } = \emptyset $,
for $ \beta _1 \not= \beta _2$, contradicting the property
of being a proper filter. A similar argument applies when $\kappa$ is finite and
  $\mu \geq \kappa $.
In other words, for such $\mu$ and $\kappa$, 
and arbitrary $\lambda$, there is no functionally regular
ultrafilter over $ { ^ \lambda  ([\mu] ^{\mathord{<} \kappa })}$.

The notion of $F$-compactness is usually given only for
 ultrafilters, since no $T_1$ space with more than one point is
$F$-compact, when $F$ is a filter which is not maximal \cite{sssr}.   
However, as Proposition \ref{rothb}  shows,  the notion of an $F$-limit point 
has some interest even when $F$ is a
filter not maximal.
Another example in which general filters are necessary
is sequential compactness, dealt with in \cite{sssr}.

 In  \cite{sssr} we have studied those 
 classes $\mathcal K$ of topological spaces 
which can be characterized by means of filter convergence, 
in the sense that there is a set $I$ and a family
$\mathcal P$ of filters over $I$ such that, 
for every $I$-indexed sequences of elements 
from any space $X \in \mathcal K$,
there is some $F \in \mathcal P$  such that the sequence
$F$-converges to some element of $X$
(here examples of such classes $\mathcal K$ are furnished by
Theorem \ref{mengthm} and Proposition \ref{rothb}).
 It has been proved in  \cite{sssr} that if $\mathcal K$ 
allows a characterization
as above, and the corresponding family $\mathcal P$ 
contains exclusively
filters which are not maximal
(e.~g., as in Proposition \ref{rothb}, but not as in Theorem \ref{mengthm}),
then any space $X$ with the property
that all powers of $X$ belong to $\mathcal K$ 
must be ultraconnected 
(a  topological space is 
 \emph{ultraconnected}  if it 
 does not contain a pair of disjoint nonempty
closed sets). 

In particular, if all powers of some
space $X$ are Rothberger, then
$X$ is ultraconnected, and thus satisfies very few separation properties
(unless it is a one-element space).
In fact, a stronger result follows directly from 
 Proposition \ref {mengvslm}.

\begin{corollary} \labbel{corroth}
If $\lambda$ is a nonzero cardinal and, for every $\alpha \in \lambda $,
$X_ \alpha $ is a space which is not 
ultraconnected,
then 
$X=\prod _{ \alpha \in \lambda }  X_ \alpha $ does not satisfy
 $R( \lambda, 2; 1)$.

In particular, if a product  satisfies the Rothberger property
(even just for countable covers),
then all but a finite number of factors are ultraconnected.
\end{corollary}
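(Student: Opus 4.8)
The plan is to derive Corollary \ref{corroth} directly from Proposition \ref{mengvslm}, exactly as the text preceding the statement suggests (``a stronger result follows directly from Proposition \ref{mengvslm}''). The key observation is that the property $R(\lambda,2;1)$ is the instance of $R(\lambda,\mu;\mathord{<}\kappa)$ with $\mu=2$ and $\kappa=1$, so that $R(\lambda,2;1)=R(\lambda,2;\mathord{<}2)$. Proposition \ref{mengvslm} applies to any nonzero cardinals $\lambda,\kappa$ and nonzero cardinal (or $\infty$) $\mu$, so it applies with $\mu=2$ and $\kappa=2$. Thus the whole argument reduces to recognizing that the notion ``$[2,2]$-compact'' coincides with ``ultraconnected'' for the spaces in question.

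First I would make the identification $[\kappa,\mu]$-compact $=[2,2]$-compact explicit. A space $Y$ fails to be $[2,2]$-compact precisely when there is an open cover $\mathcal W$ with $|\mathcal W|\le 2$ such that no subfamily of cardinality $<2$ (i.e.\ no single member, since a subfamily of cardinality $<2$ is empty or a singleton, and the empty family is never a cover of a nonempty space) is a cover. Concretely, $Y$ is not $[2,2]$-compact iff there exist two proper open sets $U_0,U_1$ with $U_0\cup U_1=Y$ and neither $U_0=Y$ nor $U_1=Y$; taking complements, $Y$ is not $[2,2]$-compact iff there exist two disjoint nonempty closed sets $C_0=Y\setminus U_1$ and $C_1=Y\setminus U_0$. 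This is exactly the failure of ultraconnectedness. Hence, for a nonempty space, \emph{$Y$ is not ultraconnected if and only if $Y$ is not $[2,2]$-compact}.

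Given this dictionary, the main statement is immediate. Since each $X_\alpha$ is not ultraconnected, each $X_\alpha$ is not $[2,2]$-compact. Applying Proposition \ref{mengvslm} with $\kappa=\mu=2$ (so that $R(\lambda,\mu;\mathord{<}\kappa)=R(\lambda,2;\mathord{<}2)=R(\lambda,2;1)$), I conclude that $X=\prod_{\alpha\in\lambda}X_\alpha$ does not satisfy $R(\lambda,2;1)$. For the ``in particular'' clause, I would invoke the remarks preceding the corollary: the Rothberger property is $R(\omega,\infty;1)$, and the Rothberger property for countable covers is $R(\omega,\omega,1)$; in either case the property trivially implies $R(\lambda,2;1)$ for the relevant $\lambda=\omega$ (fewer and smaller covers, same single-element selection), and since a product property is inherited by taking factors and by reindexing, if only countably many—say a set of size $<\omega$—of the factors were allowed to be non-ultraconnected then the product restricted to the remaining factors would still need to fail, contradicting the main assertion. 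More simply: if infinitely many factors fail to be ultraconnected, the product of infinitely (hence $\lambda=\omega$) many of them already violates $R(\omega,2;1)$ by the first part, and therefore violates the Rothberger property (even for countable covers), which contradicts the hypothesis that the whole product is Rothberger. Thus all but finitely many factors are ultraconnected.

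The only genuine point requiring care—and the step I would treat as the ``main obstacle,'' though it is really just a matter of getting the boundary case $\kappa=2$ right—is the verification that $[2,2]$-compactness is literally the failure of ultraconnectedness, including the subtle reading of ``subset of cardinality $<\kappa$'' when $\kappa=2$: such a subset is either empty or a singleton, and one must confirm that a singleton subfamily covering $Y$ would mean one $U_i=Y$, while the empty subfamily never covers a nonempty space. Once this is pinned down, everything else is a direct citation of Proposition \ref{mengvslm}. I would also remark, for completeness, that ultraconnectedness is not a separation-style weakening but exactly the $[2,2]$-compactness condition, which is why the Rothberger property forces this degenerate structure on all but finitely many factors of any product.
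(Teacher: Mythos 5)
Your proof is correct and takes essentially the same route as the paper: the paper's own (very terse) proof likewise identifies ultraconnectedness with $[2,2]$-compactness and applies Proposition \ref{mengvslm} with $\kappa=\mu=2$ (so that $R(\lambda,2;1)=R(\lambda,2;\mathord{<}2)$), and it obtains the ``in particular'' clause from the remarks before Proposition \ref{menglmtapp}, exactly as you do. One cosmetic point: your sentence ``if only countably many---say a set of size $<\omega$---of the factors were allowed to be non-ultraconnected\dots'' is garbled, but the ``more simply'' argument immediately after it (a countable subproduct of non-ultraconnected factors would inherit $R(\omega,2;1)$ yet violate it by the main assertion) is correct and is precisely the intended reasoning.
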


\begin{proof}
Immediate from Proposition \ref {mengvslm}, noticing that 
ultraconnectedness is the same as $[2,2]$-compactness
(every two-elements open cover has a one-element subcover).

The last statement follows from the remarks before Proposition \ref{menglmtapp}.
\end{proof}

\end{document}